\DeclareMathAlphabet{\mathpzc}{OT1}{pzc}{m}{it}
\newcommand*\rfrac[2]{{}^{#1}\!/_{#2}}
\theoremstyle{plain}
\newtheorem{theorem}{Theorem}[section]
\newtheorem{lemma}[theorem]{Lemma}
\newtheorem{definition}[theorem]{Definition}
\begin{document}
\thispagestyle{empty} \baselineskip 24pt
\begin{center}
A FIXED POINT THEOREM ON FUZZY LOCALLY CONVEX SPACES\\
\ \\
$M.E.\; EGWE^1\; \hbox{and}\;R.A.\; OYEWO^2$\\
Department of Mathematics, University of Ibadan, Ibadan, Nigeria.\\
$^1murphy.egwe@ui.edu.ng\;\;\; ^2raoyewo@gmail.com$\\
\end{center}
\begin{center} \textbf{Abstract}\end{center}	
 Let $X$ be a linear space over a field $\mathbb{K}$ and $(X, \rho, *)$  a fuzzy seminorm space where $(\rho, *)$ a fuzzy seminorm   with $*$ a continuous $t$-norm. We give a fixed point theorem for Fuzzy Locally Convex Space.\\
\textbf{keywords:} \emph{Fixed point, Fuzzy locally convex space, Spherically complete}\\
\textbf{MSC (2010):} \emph{47H10, 46A03, 46S40}\\

\section{Introduction}
The concept of fuzzy vectors, fuzzy topological spaces were introduced and well elucidated by Kastaras in his famous works \cite{Kastaras1},\cite{Kastaras2} and \cite{Kastaras3}. Other invariants of these abound in literature \cite{Bag}. Sadeqi and Solaty Kia \cite{Sadeqi} considered fuzzy seminormed spaces with an example of one, which is fuzzy normable but is not classical normable. More general properties and results on fuzzy seminorms can be seen in \cite{Kastaras2}.

The importance and applications of fixed point theorem cannot be overemphasized. Athaf, \cite{Althaf} established a fixed point theorem on a fuzzy metric spaces while Egwe \cite{Egwe} proved the existence of a fixed point on a nonarchimedean fuzzy normed space. A modern approach to fuzzy analysis is can be seen in \cite{Yeol}.

In this paper, we establish a version of fixed point theorem given by Sehgal in \cite{Sehgal} and in fact prove that there exists a unique fixed point for a spherically complete fuzzy locally convex space.
\section{Main Result}
\begin{definition}\cite{Yager}\cite{Klement}
	\normalfont
	A triangular norm, $t-$norm for short is a mapping  $ *:[0,1]\times[0,1] \longrightarrow[0,1],$ where $*$ is a binary operation
	such that the following axioms are satisfied. $\forall\;u,v,w\in [0,1],$
	\begin{enumerate}
		\item[\textbf{(i)}] $*(u,v) = *(v,u)$
		\item[\textbf{(ii)}] $*(u, *(v,w)) = *(*(u,v),w)$
		\item[\textbf{(iii)}] $*(u,v)\leq *(u,w)~~~~\mbox{  where  } v\leq w.$		
\item[\textbf{(iv)}]
	$	*(u, 1) = u*1 = u $,~~
	$	*(u, 0) = u*0 = 0$
	\end{enumerate}
\end{definition}
 The following  $t$-norms are well-known and frequently used.
\begin{enumerate}
	\item[(1)] $~~~ u*v = \min(u,v)$ (Standard intersection)
	\item[(2)] $~~~ u*v = uv$ (Algebraic product)
	\item[(3)] $~~~ u*v = \max(0,u+v-1)$ (Bounded difference)
\end{enumerate}
In this paper, we shall adopted the first option above.
\begin{definition}\cite{Sadeqi}
		\normalfont
Let $Y$ be a vector space over a field $\mathbb{K}, *$ a continuous $t$-norm. A fuzzy seminorm on $Y$ is a mapping
$p: Y\times\mathbb{R}\longrightarrow [0,1]$
satisfying:
\begin{enumerate}
	\item[(i)]  $p(y,t) = 0$ when $t\leq 0$,	
	\item[(ii)]  $
	p(y,t) = p\left(vy,\frac{t}{|v|}\right) \mbox{  when  }$ $t > 0, v\neq 0$
	\item[(iii)]	$p(y+z, t+s)\geq p(y,t)*p(z,s)$	$t,s\in\mathbb{R},~~ y,z\in Y$
	\item[(iv)] $p(y,\cdot)$ is an increasing function of $\mathbb{R}$ and $\displaystyle\lim_{t\to\infty} p(y,t) = 1.$
\end{enumerate}
Then $(p, *)$ is a fuzzy seminorm. Hence $(Y, p, *)$ is a fuzzy seminorm space.\\
\end{definition}
\begin{definition}
	A family $\mathscr{P}$ of fuzzy seminorms on $Y$ is called separating if to each $y_\circ \neq 0$ there is least one $p\in \mathscr{P}$ and $t \in \mathbb{R}$ such that  $p(y,t)\neq 1 $
\end{definition}
\begin{definition}
	Let $\mathfrak{D}$ be a separated fuzzy locally convex topological vector space, $\mathfrak{A}$ a nonempty subset of $\mathfrak{D}$ and $\mathcal{B}$ be a neighbourhood basis of the origin consisting of absolutely fuzzy convex open subsets of $\mathfrak{D}$. For each $B\in\mathcal{B}$, let $\varphi_B$ be the Minkowski's functional of $B$ and $p$ a fuzzy seminorm on $\mathfrak{A}$. For each $y,z \in \mathfrak{A }$ , $t \in \mathbb{R}$ and $\alpha\in (0,1),$ we have\\
  $\varphi_B(y-z) = inf\{t > 0 :p(y-z)<t\}$.\\
  $\varphi_B(y-z,t)=  sup\{\alpha \in(0,1) :p(y-z)<t \}$.\\
  $	B(0,\alpha,t) = \{y-z: p(y-z,t)>1-\alpha\}$.\\
  $	B(y,\alpha,t) = \{z: p(y-z,t)>1-\alpha\}$.
\end{definition}

\begin{definition}
 A mapping $F:\mathfrak{A}\longrightarrow \mathfrak{D}$ is a fuzzy $B-$ contraction $(B\in\mathcal{B})$ if and only if for each $\varepsilon > 0,~\alpha \in (0,1) $ there is a $\delta = \delta(\varepsilon,B,\alpha) >0 $ and $\beta = \beta(\varepsilon,B,\alpha)\in (0,1)$ such that if  $y,z\in \mathfrak{A}$ and if
\begin{align}\label{*1}
1-\alpha\geq\varphi_B(y-z,\varepsilon + \delta)>1-(\alpha + \beta )~\mbox{  then  }~ \varphi_B(F(y)-F(z),\varepsilon )>1-\alpha
\end{align}
	\end{definition}
If $F: \mathfrak{A}\longrightarrow \mathfrak{D}$ is a fuzzy $B$- Contraction for each $B\in\mathcal{B}$, then $F$ is a fuzzy $\mathcal{B}$- Contraction.\\
Note that if $F$ is a fuzzy $\mathcal{B}$- Contraction, then $F$ is fuzzy continuous.\\
\begin{lemma}
	Let $F:\mathfrak{A}\longrightarrow \mathfrak{D}$ be a fuzzy $\mathcal{B}-$contraction. Then $F$ is fuzzy $\mathcal{B}-$contractive, that is for each $B\in\mathcal{B},\;	\varphi_B(F(y)-F(z),\varepsilon)>\varphi_B(y-z,\varepsilon + \delta)$ if $\varphi_B(y-z,\varepsilon + \delta)\neq 1$ and 1 otherwise.
\end{lemma}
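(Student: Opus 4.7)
The proof is a direct invocation of the contraction axiom \eqref{*1} after tying its free parameter $\alpha$ to the quantity $\varphi_B(y-z,\varepsilon+\delta)$ one is trying to beat. Fix $B\in\mathcal{B}$, $\varepsilon>0$, and $y,z\in\mathfrak{A}$; the ``$=1$'' alternative is built into the statement, so I focus on the complementary case $r:=\varphi_B(y-z,\varepsilon+\delta)<1$ and aim to produce a $\delta>0$ for which $\varphi_B(F(y)-F(z),\varepsilon)>r$.

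First I would set $\alpha:=1-r$. When $r\in(0,1)$ this lies in the admissible range $(0,1)$, and feeding it into the fuzzy $B$-contraction definition yields a $\delta=\delta(\varepsilon,B,\alpha)>0$ and a $\beta=\beta(\varepsilon,B,\alpha)\in(0,1)$; I then take this $\delta$ to be the one named in the statement of the lemma.

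Next I would verify that the pair $(y,z)$ satisfies the double hypothesis of \eqref{*1} with these parameters. The upper inequality $1-\alpha\geq\varphi_B(y-z,\varepsilon+\delta)$ reads $r\geq r$ and holds by equality, while the lower inequality $\varphi_B(y-z,\varepsilon+\delta)>1-(\alpha+\beta)$ reads $r>r-\beta$ and is strict because $\beta>0$. The contraction axiom then delivers
\[
\varphi_B(F(y)-F(z),\varepsilon)\;>\;1-\alpha\;=\;r\;=\;\varphi_B(y-z,\varepsilon+\delta),
\]
which is exactly the desired fuzzy $\mathcal{B}$-contractivity.

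\textbf{Main obstacle.} The only genuinely delicate point is the apparent circularity in the first step: I define $\alpha$ in terms of $r$, yet $r$ depends on $\delta$ which depends on $\alpha$. The resolution is to read the lemma's $\delta$ not as an externally prescribed input but as the specific $\delta(\varepsilon,B,1-r)$ emerging from \eqref{*1} once the target $\alpha=1-r$ is fixed. The boundary case $r=0$ is handled analogously by taking $\alpha$ close enough to $1$ that $\alpha+\beta(\alpha)>1$, which makes the lower inequality $\varphi_B(y-z,\varepsilon+\delta)>1-(\alpha+\beta)$ hold for free and the upper inequality $1-\alpha\geq 0$ trivially true. Beyond this bookkeeping, no tool beyond the two definitions is needed.
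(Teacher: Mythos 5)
Your proposal is correct and follows essentially the same route as the paper's own proof: both set $\alpha:=1-\varphi_B(y-z,\varepsilon+\delta)$, observe that the strict lower inequality in \eqref{*1} is automatic because $\beta>0$, and then read off the conclusion from the contraction axiom, with the lemma's $\delta$ understood as the $\delta(\varepsilon,B,\alpha)$ produced by the definition. Your explicit handling of the circularity between $\delta$ and $\alpha$, and of the boundary case $r=0$, is in fact slightly more careful than the paper, which restricts silently to $\alpha\in(0,1)$.
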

\begin{proof}
		Let $y,z\in \mathfrak{A}$ and suppose $\varphi_B = \varphi$,~$\varphi(y-z,\varepsilon+\delta) = 1-\alpha < 1$ for $\varepsilon > 0 $ and $\alpha\in(0,1) $ . Then $
	\varphi(y-z,\varepsilon + \delta)>1-(\alpha + \beta) ~\mbox{  for each  }~ \delta> 0 $
	and in particular $\varphi(y-z,\varepsilon + \delta_0)>1-(\alpha + \beta_0)$
	where $\delta_0 = \delta(\varepsilon,B,\alpha)$, $\beta_0 = \beta(\varepsilon,B,\alpha)$ .Therefore by \eqref{*1}~~$\varphi(F(y) - F(z),\varepsilon)>1-\alpha$. Since $B$ is open, this implies that $
	\varphi(F(y) - F(z),\varepsilon)>1-\alpha = \varphi(y-z,\varepsilon+\delta)$.
	If $1-\alpha = 1$, then $\varphi(y-z,\varepsilon+\delta)>1-\alpha $~ for each~ $\varepsilon >0$ , $\alpha\in (0,1)$ and hence by \eqref{*1}~~$\varphi(F(y)-F(z),\varepsilon)> 1 - \alpha$ which implies that $\varphi(F(y)-F(z),\varepsilon )=1.$
\end{proof}
\begin{theorem}
	Let $\mathfrak{A}$ be a sequentially complete fuzzy subset of $\mathfrak{D}$, $\mu$ be the membership function on $\mathfrak{A}$ and $F:\mathfrak{A}\longrightarrow \mathfrak{D}$ be a fuzzy $\mathcal{B}$-contraction. If $F$ satisfies the condition:\\
	for each $y\in \mathfrak{A}$ , $\alpha \in (0,1),~ \mu(y)=\alpha$ with $\mu(F(y))> \alpha $,  \mbox{there is a} $\mu_{((y, F(y))\Lambda  \mathfrak{A})}(w) =  \mu _{(y, F(y))}(w) \star \mu_\mathfrak{A} (w)$
	such that  $\mu (F(w))\leq \mu (w)$
	then $F$ has a unique fixed point in $\mathfrak{A}.$
\end{theorem}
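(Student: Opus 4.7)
The approach is Picard iteration adapted to the fuzzy locally convex setting, with the membership–function hypothesis used to keep the iterates inside $\mathfrak{A}$ (since $F$ maps $\mathfrak{A}$ into the possibly larger $\mathfrak{D}$). Starting from an arbitrary $y_{0}\in\mathfrak{A}$, I would build $\{y_{n}\}\subset\mathfrak{A}$ recursively as follows. If $\mu(F(y_{n}))\le\mu(y_{n})$, set $y_{n+1}:=F(y_{n})$; otherwise invoke the hypothesis to pick $y_{n+1}:=w_{n}$ on the fuzzy segment between $y_{n}$ and $F(y_{n})$ intersected with $\mathfrak{A}$, satisfying $\mu(F(w_{n}))\le\mu(w_{n})$. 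By construction $y_{n+1}\in\mathfrak{A}$ and, in both cases, $y_{n+1}$ lies in a controllable fuzzy neighbourhood of $F(y_{n})$.

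For each $B\in\mathcal{B}$, iterating the contractive inequality supplied by Lemma~2.6 yields $\varphi_{B}(y_{n}-y_{n+1},\varepsilon)\to 1$ as $n\to\infty$. Combining this with the fuzzy triangle inequality of Definition~2.2(iii), the monotonicity of $\varphi_{B}(y,\cdot)$ from Definition~2.2(iv), and the continuity of the $t$-norm $\min$, one sees that $\{y_{n}\}$ is Cauchy with respect to every $\varphi_{B}$, $B\in\mathcal{B}$. Sequential completeness of $\mathfrak{A}$ produces a limit $y^{\ast}\in\mathfrak{A}$. Fuzzy continuity of $F$ (remarked after Definition~2.5) gives $F(y_{n})\to F(y^{\ast})$, and since $y_{n+1}$ approximates both $y^{\ast}$ and $F(y^{\ast})$, the separation hypothesis on $\mathfrak{D}$ forces $y^{\ast}=F(y^{\ast})$.

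Uniqueness falls out of contractivity alone. Were $y,z\in\mathfrak{A}$ two distinct fixed points, the separating family of fuzzy seminorms would supply $B\in\mathcal{B}$, $\varepsilon>0$ and $\delta>0$ with $\varphi_{B}(y-z,\varepsilon+\delta)<1$; Lemma~2.6 would then yield $\varphi_{B}(y-z,\varepsilon)=\varphi_{B}(F(y)-F(z),\varepsilon)>\varphi_{B}(y-z,\varepsilon+\delta)$, contradicting monotonicity of $\varphi_{B}$ in the $t$-variable. The main obstacle will be the bookkeeping in the construction step: one must verify that the membership–function hypothesis delivers $w_{n}$ close enough to $F(y_{n})$ \emph{in each $\varphi_{B}$} for the contractive estimate to propagate through the recursion and for the limit identification $y_{n+1}\to F(y^{\ast})$ to survive. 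This is where the interplay between the fuzzy intersection $\mu_{(y,F(y))\wedge\mathfrak{A}}$ and the Minkowski functionals $\varphi_{B}$ genuinely has to be pinned down; without a quantitative comparison between $\mu$-closeness and $\varphi_{B}$-closeness, the scheme stalls.
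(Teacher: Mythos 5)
Your outline follows the same route as the paper (the Sehgal--Singh scheme: build $y_{n+1}$ either as $F(y_n)$ or as a point $\lambda_n y_n+(1-\lambda_n)F(y_n)$ of the segment supplied by the membership hypothesis, show $y_n-F(y_n)\to 0$ and $y_{n+1}-y_n\to 0$, prove Cauchyness, pass to the limit, and conclude), but the two steps that carry all the difficulty are asserted rather than proved, and neither follows from what you cite. First, Lemma 2.6 only gives the strict pointwise improvement $\varphi_B(F(y)-F(z),\varepsilon)>\varphi_B(y-z,\varepsilon+\delta)$; iterating a merely contractive map produces an increasing bounded sequence $\varphi_B(F(y_n)-y_n,\varepsilon)$ whose limit need not be $1$. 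The paper closes this gap with a contradiction argument that uses the full uniformity of the $\mathcal{B}$-contraction definition: if the limit were $1-\alpha<1$, then with $\delta=\delta(r,B,\alpha)$ and $\beta=\beta(r,B,\alpha)$ one eventually has $\varphi(y_m-F(y_m),r+\delta)>1-(\alpha+\beta)$ at an index $m$ where $y_{m+1}=F(y_m)$, and the contraction condition then forces $\varphi(F(y_m)-F(y_{m+1}),r)>1-\alpha$, contradicting that $1-\alpha$ bounds the sequence. You need this argument (or an equivalent one); ``iterating the contractive inequality yields $\varphi_B(y_n-y_{n+1},\varepsilon)\to1$'' is not a proof.

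Second, and more seriously, $y_{n+1}-y_n\to 0$ together with the triangle inequality does not give Cauchyness --- this is exactly the harmonic-series phenomenon, and no amount of continuity of the $t$-norm repairs it. The paper devotes a separate contradiction argument to this step: assuming the sequence is not Cauchy, it extracts indices $n(i)<m(i)$ with $m(i)$ minimal violating the Cauchy estimate, squeezes $\varphi(y_{n(i)}-y_{m(i)},\varepsilon+\delta)$ from below using that minimality together with the already established facts (a) and (b), and then applies the uniform contraction condition to the pair $F(y_{n(i)}),F(y_{m(i)})$ to contradict the assumed violation; again the uniform $\delta(\varepsilon,B,\alpha)$, $\beta(\varepsilon,B,\alpha)$ are essential, not just Lemma 2.6. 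On the positive side, your uniqueness argument (separating seminorms plus Lemma 2.6 plus monotonicity of $\varphi_B$ in the $t$-variable) is cleaner than the paper's telescoping estimate at scales $t/2^j$, and your closing worry about relating $\mu$-closeness to $\varphi_B$-closeness is legitimate: the paper itself passes from $\mu(y_{n+1})\le\mu(y_n)$ to $\varphi(y_{n+1}-y_n,t)\to1$ without justification, whereas the only thing the construction genuinely needs from the membership hypothesis is the representation $y_{n+1}=\lambda_n y_n+(1-\lambda_n)F(y_n)$ with $\mu(F(y_{n+1}))\le\mu(y_{n+1})$, which feeds the identities used in establishing (a) and (b).
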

\begin{proof}
	Let $y_0\in \mathfrak{A}, t>0$ , $\alpha \in (0,1)$ with $\mu(y_0)= \alpha $ and choose a sequence  $\mu_{y_n}(y_{n_i})\leq \mu_A(y_{n_i})~ \forall~ y_{n_i}\in \mathfrak{D},~ i\in I$ defined (inductively) as follows: for each $n\in I$ (positive integers) If $\mu(F(y_0)\leq \mu(y_0)$ then set $(y_1) = F(y_0).$ Hence $\mu(y_1)\leq \mu(y_0)$ which implies $\varphi (y_1 - y_0,t) \longrightarrow 1$. That is, $y_1- y_0 \longrightarrow 0$
	and if $\mu(F(y_0)) > \mu(y_0)$, let $\mu_{((y_0,F(y_0))\Lambda \mathfrak{A})}(y_1) = \mu_{(y_0,F(y_0))}(y_1)~ \star~ \mu_\mathfrak{A}(y_1)$ such that $\mu(F(y_1))\leq \mu(y_1).$ which implies $\varphi(F(y_1)-y_1,t)\longrightarrow 1$. That is, $F(y_1)-y_1 \longrightarrow 0$ \\
	Since we have chosen the sequence $\{y_n\}$, if $\mu F(y_n)\leq \mu(y_n) $, set $y_{n+1} = F(y_n)$. Hence $\mu(y_{n+1})\leq \mu(y_n)$ implies $\varphi(y_{n+1}-y_n,t)\longrightarrow 1$ and if $\mu (F(y_n)) > \mu(y_n) $, let $\mu_{((y_n,F(y_n))\Lambda \mathfrak{A})}(y_{n+1}) = \mu_{(y_n,F(y_n))}(y_{n+1})~ \star~ \mu_\mathfrak{A}(y_{n+1})$, such that $\mu(F(y_{n+1}))\leq \mu(y_{n+1}).$ which implies $\varphi(F(y_{n+1}) - y_{n+1},t)\longrightarrow 1$ , That is, $F(y_{n+1}) - y_{n+1} \longrightarrow 0$. \\ It then follows that for each $n\in I$, there is a $\lambda_n \in [0,1)$ satisfying
	\begin{equation}\label{*3}
	y_{n+1} = \lambda_n y_n + (1-\lambda_n)F(y_n).
	\end{equation}
	We show that the fuzzy sequence $\{y_n\}$ so constructed satisfies
	\begin{equation}\label{*4}
	(a)~~	y_{n+1}-y_n\longrightarrow 0~~~(b)~~y_n-F(y_n)\longrightarrow 0
	\end{equation}
	
		To establish \eqref{*4},note that by \eqref{*3}
	\begin{align}
	y_{n+1}-y_n& = (1-\lambda_n)(F(y_n)-y_n)\label{*5}\\
	F(y_n)-y_{n+1}& = \lambda_n(F(y_n)-y_n)\label{*6}
	\end{align}
	Therefore, for $B \in\mathcal{B}$ with $\varphi_B = \varphi$, it follows by the above lemma that
	\small
	\begin{align*}
	\varphi(F(y_{n+1})-y_{n+1}, \epsilon)&\geq  \varphi\left(F(y_{n+1})-y_{n+2},\epsilon\right)\star\varphi\left(y_{n+2}-y_{n+1},\epsilon\right)\\
	& \geq \varphi\left(F(y_n)-y_{n+1},\epsilon\right)\star\left(y_{n+1}-y_n,\epsilon\right)\\
		& \geq \varphi\left(\lambda_n(F(y_n)-y_n),\epsilon\right)\star\left((1-\lambda_n)(F(y_n)-y_n,\epsilon\right)\\
	&\geq 1 \star\varphi (F(y_n)-y_n,\epsilon)  \\
	&\geq\varphi(F(y_n)-y_n,\epsilon)
	\end{align*}
	\normalsize
		Thus by \eqref{*5}, $	\varphi(F(y_{n+1})-y_{n+1}, \epsilon)\geq \varphi(F(y_n)-y_n,\epsilon)$
	for each $n\in I$, that is,
	$\{\varphi(F(y_n)-y_n,\epsilon)\}$ is an increasing
	sequence of non negative reals and hence for each $\varphi = \varphi_B, B\in\mathcal{B}$ there is an $ r>0$ and $0<\alpha<1 $ with 
	\begin{equation}\label{*7}
	1-\alpha\geq \varphi(F(y_n)-y_n,r)\longrightarrow 1-\alpha\leq 1  
	\end{equation}
	we claim that $1 - \alpha \equiv 1$. Suppose  $1 - \alpha > 1$.   
	Choose $\delta = \delta(r, B,\alpha)>0$ and $\beta= \beta (r,B,\alpha)\in (0.1)$ satisfying \eqref{*1}. Then by \eqref{*7} there is a $n_0\in I$
	such that $\varphi(F(y_n)-y_n,r+\delta)>1-(\alpha+\beta)$ for all $ n \geq n_0 $.
		  Now choose an $m\in I$, $m\geq n_0 \ni$
	 $ y_{m+1} = F(y_m)$,
	 (let $m = n_0$ if $\mu(F(y_{n_0}))\leq \mu(y_{n_0}), \alpha \in (0,1)~ with ~\mu(y_m)  = \alpha$) otherwise let $m = n_0+1,$ then $\mu(F(y_{n_0+1}))\leq \mu(y_{n_0+1}) $.
	 Thus for this $m$,
	 $$
	 \varphi(y_m-y_{m+1},r+\delta) = \varphi(y_m-F(y_m),r+\delta)>1-(\alpha + \beta)
	 $$
	 and hence by \eqref{*1}
	 $$
	 \varphi(y_{m+1}-F(y_{m+1}),r) = \varphi(F(y_m)-F(y_{m+1}),r)>1-\alpha
	 $$
	 which contradicts \eqref{*7}\\
	  Thus $1 - \alpha = 1$ for each $B\in \mathcal{B}$ and this implies that the sequence $y_n-F(y_n)\rightarrow 0$. This establishes (b) and (a) now  follow by \eqref{*5}
	  	$y_{n+1}-y_n = (1-\lambda_n)(F(y_n)-y_n)$ and since it is a known fact that $F(y_n)$ is shifting towards $y_n$. then as $\lambda_n\longrightarrow 1, y_{n+1}\longrightarrow y_n$ and since $y_n-F(y_n)\longrightarrow 0$ we are sure $\lambda_n$ is moving to 1 hence we can conclude that $y_{n+1}-y_n\longrightarrow 0$.\\
	We assert that $\{y_n\}$ is a Cauchy sequence in $A$. Suppose not.
	Let for each $i\in I,$ $A_i = \{y_n:n\geq i\}$. Then by assumption there is $B\in\mathcal{B}\ni$
	$
	\varphi(y_n-y_m,\varepsilon + \delta)\leq 1-(\alpha + \beta) \mbox{  for any } i\in I.
	$
	Choose an $\varepsilon$ with $0<\varepsilon<1$, $0<\alpha<1$ and a $\delta$ with $0<\delta<\delta(\varepsilon,B, \alpha)$, $0<\beta<\beta(\varepsilon,B,\alpha)<1 $  satisfying $\varepsilon+\delta<1$, $\alpha+\beta<1$. \\
	It follows that $\varphi( y_n-y_m,\varepsilon +\frac{\delta}{2} )\leq 1- (\alpha+\frac{\beta}{2})$ for any $i\in I$. Thus for each $i\in I$, there exist integers $n(i)$ and $m(i)$ with $i\leq n(i)<m(i)$ such that
	\begin{equation}\label{*8}
 \varphi(y_{n(i)}-y_{m(i)},(\epsilon+\frac{\delta}{2})) \leq 1- (\alpha+\frac{\beta}{2}).
	\end{equation}
	
Let $m(i)$ be the least integer exceeding $n(i)$ satisfying \eqref{*8}.
$
$
Then by \eqref{*8}
\begin{equation}\label{*9}
\begin{split}
1 - (\alpha + \beta)\geq \varphi(y_{n_i}-y_{m_i},\varepsilon + \delta) &= \varphi (y_{n(i)}-y_{m(i)-1} + y_{m(i)-1}-y_{m(i)},\varepsilon + \delta )\\ &\geq\varphi (y_{n(i)}-y_{m(i)-1},\varepsilon +\frac{\delta}{2} ) \star \varphi(y_{m(i)-1}-y_{m(i)},\frac{\delta}{2}) \\
&\geq 1-(\alpha+\frac{\beta}{2}) \star \varphi(y_{m(i)-1}-y_{m(i)},\frac{\delta}{2})\\
&\geq  1-(\alpha+ \frac{\beta}{2}) \star 1\\
&\geq 1 - (\alpha+ \frac{\beta}{2})\\
& > 1- (\alpha+ \beta)
\end{split}
\end{equation}
Now by \eqref{*4}, there is a $i_0\in I\ni \varphi (y_i-F(y_i), \rfrac{\delta}{4})> 1 - (\rfrac{\beta}{4})$ and $\varphi (y_{i-1}-y_i, \rfrac{\delta}{4})> 1 - (\rfrac{\beta}{4}) $ whenever $ i \geq i_0$,
and hence by \eqref{*9}
 $\varphi(y_{n(i)}-y_{m(i)},\varepsilon+\delta) > 1-(\alpha+\beta)$.
It follows from \eqref{*1} that for all i $\geq i_0$,
$\varphi(F(y_{n(i)})-F(y_{m(i)}),\varepsilon) > 1-\alpha $
However, for all i $\geq i_0$,

		\small
	\begin{align*}
	1-(\alpha+\frac{\beta}{2})>\varphi(y_{n(i)}-x_{m(i)},\epsilon+\frac{\delta}{2})& \geq\varphi\left(y_{n_(i)}-F(y_{n_{(i)}}) ,\frac{\delta}{4}\right)\\
	& \star \varphi\left( F(y_{n(i)})-F(y_{m(i)}), \epsilon\right)\\
	&\star \varphi\left(F(y_{m(i)})-y_{m(i)}, \frac{\delta}{4}\right)\\
	& \geq 1 *\left(1- \alpha\right)  \star 1\\ 
	& \geq 1-\alpha\\
	&>1-\left(\alpha+\frac{\beta}{2}\right)
	\end{align*}
	\normalsize
	which contradicts (\eqref{*8}).
	Thus $\{y_n\}$ is a Cauchy sequence in $A$  and the sequential completeness implies that there is a $U\in A\ni$
	$$
	\lim_{n\to\infty}\varphi(y_n-U,t) = 1~~\forall~~ t>0
	$$
	it is required to check if the limit is unique.\\
	Suppose there exist $V\in A\ni$
	$$
	\lim_{n\to\infty}\varphi(y_n-V,t) = 1~~\forall~~ t>0
	$$
	such that $V\neq U$ then
	\begin{align*}
	\varphi(U-V,t)&\geq\varphi\left(U-y_n,\rfrac{t}{2}\right)\star\varphi\left(y_n-V, \rfrac{t}{2}\right)\\
	&\geq\left(U-U,\rfrac{t}{2}\right)\star\varphi\left(V-V,\rfrac{t}{2}\right) taking~ limit~ as~ n \to \infty\\
	&\geq 1\star1\\
	&\geq1\\
	&=1
	\end{align*}
	which is indicating that the $U$ is same as $V$ hence our  assumption contradict our result $\therefore U = V$\\
	$\Longrightarrow$ Limit  $U$ is unique.\\
	Next to find out if $F$ has a fixed point since $F$ is fuzzy continuous, consider \\
	$ \varphi(y_{n+1} - y_n ,t) = \varphi(F(y_n) -F(y_{n-1}) ,t) \geq \varphi(y_n-y_{n-1} ,t)$\\
	$ \varphi(y_n- y_{n+1} ,t)\geq \varphi(y_{n-1} - y_n ,t)$\\
	Taking limit as $n\to\infty$ , we get
	\begin{align*}
	\varphi(U - F(U), t) & \geq \varphi(U - U, t)\\
	\varphi(U - F(U), t) & \geq 1 \\
	\varphi(U - F(U), t) & =1 ~ from~ (b) ~in~ \eqref{*4}\\
	\implies U = F(U).
	\end{align*}
	Thus, $U$ is a fixed point in $\mathfrak{A}$.
	Hence, the  existence of fixed point in fuzzy locally convex space $\mathfrak{A}$\\
	Since $U = F(U)$\\
	If $q$ is another fixed point in $A$ then $ q = F(q)$\\
	$\implies \varphi (q-F(q),t)=1,~~t>0$\\
	Such that, $q\neq U$\\
	Hence
	\begin{align*}
	1>\varphi(U-q,t)&\geq\varphi\left(U-F(U),\rfrac{t}{2}\right)\star\varphi\left(F(U)-q,\rfrac{t}{2}\right)\\
	&\geq 1\star\varphi\left(U-F(U),\rfrac{t}{4}\right)\star\varphi\left(F(U)-q,\rfrac{t}{4}\right)\\
	&\geq 1\star1\star\varphi\left(U-F(U),\rfrac{t}{8}\right)\star\varphi\left(F(U)-q,\rfrac{t}{8}\right)\\
	&\geq 1\star1\star1\star\varphi\left(U-F(U),\rfrac{t}{16}\right)\star\varphi\left(F(U)-q,\rfrac{t}{16}\right)\\
	&\vdots\\
	&\geq 1\star1\star1\star1\star\cdots\star\varphi\left(F(U)-q,\rfrac{t}{2^j}\right)\\
	& = 1 \mbox{  as  }  j\longrightarrow\infty
	\end{align*}
	$\Longrightarrow U = q \Longrightarrow U$ is a unique fixed point of the fuzzy locally convex space $\mathfrak{A}$ and this complete the theorem on the existence of a fixed point theorem for fuzzy Locally Convex Space.
		\end{proof}


\begin{thebibliography}{99}
\bibitem{Althaf} Althaf M.: Some results on fixed point theorems on fuzzy metric spaces, \emph{Int J. Math. Arch. 9(2018), 66-70.}
\bibitem{Bag} Bag T.and Samanta S.K.: Finite dimensional fuzzy normed linear spaces,  \emph{Ann. Fuzzy Math. and Inf. 6 (2) (2013), 271-283.}
\bibitem{Klement} Klement E.P. and Mesiar F.: Triangular norms, \emph{Tatra Mountains Math. Pub.,(13) 1997}
\bibitem{Egwe} Egwe M.E.: On Fixed Point Theorem in Non-Archimedean Fuzzy Normed Spaces, \emph{J. Anal. Appl., 18 (1) (2020), 99-103.}
\bibitem{Kastaras1} Kastaras A.K.: Topological Linear spaces I, \emph{Fuzzy sets and Systems 6(1981), 85-95.}
\bibitem{Kastaras2} Kastaras A.K.: Topological Linear spaces II, \emph{Fuzzy sets and Systems 12(1984), 143-154.}
\bibitem{Kastaras3} Katsaras A.K. and Liu  D.B.: Fuzzy vector spaces and fuzz~ topological vector spaces, \emph{J. Math.
Anal. Appl. 58 (1977) 135-146.}
\bibitem{Sadeqi} Sadeqi I. and Solaty kia F.: Fuzzy seminormed linear spaces, \emph{First Joint Congress on Fuzzy and Intelligent systems,} University of Mashha, Iran, 2007.
\bibitem{Sehgal}  Sehgal V.M. and Singh S.P.: On a fixed point theorem of Krasnoselskii for locally convex spaces, \emph{Pacific J. Math., 62 (2), 561-567.}
\bibitem{Yager} Yager R.R., Detyniecki M. and Bouchon-Meunier B.: Specifying $t$-norms based on the value of $(1/2,1/2)$, \emph{Mathware and soft computing, 7 (1) (2000), 77-87.}
\bibitem{Yeol} Yeol Je C., Rassias T.M., and Saadati R.: Fuzzy operator theory in Mathematical Analysis, \emph{Springer, 2018.}
\end{thebibliography}
\end{document}